\newtheorem{theorem}{Theorem}[section]
\newtheorem{corollary}[theorem]{Corollary}
\newtheorem{lemma}[theorem]{Lemma}
\theoremstyle{definition}
\newtheorem{definition}[theorem]{Definition}
\theoremstyle{remark}
\numberwithin{equation}{section}
\newcommand{\R}{\mathbb R}
\newcommand{\C}{\mathbb C}
\newcommand{\F}{\mathbb F}
\newcommand{\gna}{\mathrm{gna}}
\newcommand{\ga}{\mathrm{ga}}
\newcommand{\sna}{\mathrm{sna}}
\newcommand{\ona}{\mathrm{ona}}
\newcommand{\una}{\mathrm{una}}
\newcommand{\suna}{\mathrm{suna}}
\newcommand{\tr}{\mathrm{tr}\,}
\newcommand{\act}{\,\triangleright}
\newcounter{zlist}
  \newcounter{blist} 
  \newenvironment{blist}{\begin{list}{(\alph{blist})}{ 
  \usecounter{blist}\leftmargin2.5em\labelwidth2em\labelsep0.5em 
  \topsep0.6ex  
  \parsep0.3ex plus0.2ex minus0.1ex}}{\end{list}} 
\title{On matrix Lie affgebras} 
\author[T. Brzezi\'nski]{Tomasz Brzezi\'nski}
\address{
Department of Mathematics, Swansea University, 
Fabian Way,
  Swansea SA1 8EN, U.K.\ \newline 
Faculty of Mathematics, University of Bia{\l}ystok, K.\ Cio{\l}kowskiego  1M,
15-245 Bia\-{\l}ys\-tok, Poland}
\email{T.Brzezinski@swansea.ac.uk}
\thanks{The research of Tomasz Brzeziński is partially supported by the National Science Centre, Poland, grant no. 2019/35/B/ST1/01115.}
\author[K.\ Radziszewski]{Krzysztof Radziszewski}
\address{
Faculty of Mathematics, University of Bia{\l}ystok, K.\ Cio{\l}kowskiego  1M,
15-245 Bia\-{\l}ys\-tok, Poland}
\email{k.radziszewski@uwb.edu.pl}
 \subjclass[2020]{17B05; 20N10; 81R12}
  \keywords{heap; affine space; Lie bracket}
\begin{document}
\begin{abstract}
Lie brackets or Lie affgebra structures on several classes of affine spaces of matrices are studied. These include general normalised affine matrices, special normalised affine matrices, anti-symmetric and anti-hermitian normalised affine matrices and special anti-hermitian normalised affine matrices. It is shown that, when retracted to the underlying vector spaces, they  correspond to classical matrix Lie algebras: general and special linear, anti-symmetric, anti-hermitian and special anti-hermitian Lie algebras respectively.  
 \end{abstract}
\maketitle

\section{Introduction} \noindent
The study of Lie brackets on affine spaces or {\em Lie affgebras} was initiated in \cite{GraGra:Lie} as a part of the geometric programme of developing a frame-independent formulation of classical mechanics \cite{Tul:fra} in which vector bundles are replaced by bundles with affine fibres \cite{GraGra:av1,GraGra:fra,GraGra:av2}. The definition of a Lie bracket in \cite{GraGra:Lie} relies on the linear structure of the vector space underlying an affine space. This reliance has been shown to be not necessary, leading to the intrinsic or vector-space independent formulation of Lie affgebras \cite{BrzPap:Lie}. From this viewpoint a vector space is an artefact of the definition of an affine space \cite{OstSch:bar,BreBrz:hea}, while any Lie bracket on an affine space can be retracted to the (bilinear) Lie bracket on this vector space. In the present paper we  analyse and extend the class of matrix Lie affgebras introduced in \cite{Brz:spe}. In particular we present equivalent formulation of resulting classes, which allows one to identify quickly classical matrix Lie algebras to which matrix Lie affgebras are retracted by fixing an element in accordance with \cite[Theorem~3.15]{BrzPap:Lie}.

The paper is organised as follows. In Section 2 we recall the intrinsic definitions of an affine space and a Lie affgebra. Next we describe the process of retracting of a bi-affine Lie bracket into a bi-linear Lie bracket on an underlying vector space. Section 3 contains main results. First several classes of  matrix Lie affgebras are introduced.  These are then shown to be isomorphic with Lie affgebras which in a natural way relate to classical matrix Lie algebras. As a corollary we thus obtain Lie algebras to which the matrix Lie affgebras are retracted.

We use the following notation for classical (matrix) Lie algebras. The Lie algebra of $n\times n$-matrices over a field $\F$ is denoted by $\mathrm{gl}(n,\F)$ with $\mathrm{sl}(n,\F)$ standing for traceless matrices with entries from $\F$; $\mathrm{o}(n)$ denotes real anti-symmetric matrices, while $\mathrm{u}(n)$ stands for complex anti-hermitian matrices. Finally,  $\mathrm{su}(n)$ is the Lie algebra of all traceless anti-hermitian matrices.

\section{Lie brackets on affine spaces}
In a vector-free formulation (see e.g.\ \cite{BreBrz:hea}) an affine space over a field $\F$ can be realised as an algebraic system consisting of a set $A$ together with ternary operations
\begin{subequations}\label{heap.action}
\begin{equation}\label{heap}
\langle -,-,-\rangle : A\times A\times A \to A, \qquad (a,b,c)\mapsto \langle a,b,c\rangle,
\end{equation}
\begin{equation}\label{action}
  - \act_- - :    \F\times A\times A \to A, \qquad (\alpha,a,b)\mapsto \alpha \act _ab,
\end{equation}
\end{subequations}
satisfying the following conditions, for all $a,b,c,d,e\in A$ and $\alpha,\beta,\gamma \in \F$,
\begin{subequations}\label{aff.axioms}
    \begin{equation}\label{heap.axioms}
    \langle\langle a,b,c\rangle,d,e\rangle = \langle a,b,\langle c,d,e\rangle\rangle, \quad \langle a,b,b\rangle = a = \langle b,b,a\rangle, \quad \langle a,b,c\rangle = \langle c,b,a\rangle ,
        \end{equation}
        \begin{equation}\label{act.add}
       (\alpha -\beta+\gamma) \act_a b =  \langle \alpha\act_a b,   \beta\act_a b,  \gamma\act_a b \rangle  ,
        \end{equation}
        \begin{equation}\label{act.heap}
       \alpha \act_a \langle b,c,d\rangle =  \langle \alpha\act_a b,   \alpha\act_a c,  \alpha\act_a d \rangle  ,
        \end{equation}
        \begin{equation}\label{act.ass.uni.zero}
           (\alpha\beta)\act_a b = \alpha\act_a(\beta\act_a b), \quad 1\act_a b =b, \quad 0 \act_a b =a,
        \end{equation}
        \begin{equation}\label{act.base.change}
           \alpha\act_a b = \langle \alpha\act_cb, \alpha\act_c a, a\rangle.
        \end{equation}
\end{subequations}
A homomorphism of affine spaces or {\em affine transformation} is a function $f:A\to B$ that preserves both operations, that is, for all $a,b,c\in A$ and $\alpha \in \F$,
\begin{equation}\label{aff.tran}
    f \left( \langle a,b,c\rangle \right) =  \langle f(a),f(b),f(c)\rangle, \quad \mbox{and} \quad f(\alpha\act_a b) = \alpha\act_{f(a)}f( b).
\end{equation}

A few explanations and forays into terminology are in order now. The conditions \eqref{heap} mean that $A$ together with $\langle -,-,-\rangle$ is an {\em abelian heap}, the notion introduced by Pr\"ufer in \cite{Pru:the}. We will refer to $\langle -,-,-\rangle$ as the {\em heap operation}. The first condition in \eqref{heap.axioms} might be referred to as the {\em quasi-} or {\em heap associativity}, the second is known as the {\em Mal'cev identities}, and the last one is the commutativity of the heap operation (required originally in \cite{Pru:the}, but not needed for general heaps \cite{Bae:ein}). Altogether, the laws that heap operation of an abelian heap satisfies imply that the placement of brackets $\langle,\rangle$ does not play a role, hence we can write succinctly
$\langle a,b,c,d,e\rangle$ for $\langle \langle a,b,c\rangle,d,e\rangle$ or any other  distribution of heap operations. 

A homomorphism of heaps is a function that preserves the ternary operations, i.e.\ satisfies the first of condition \eqref{aff.tran}. Every abelian group $A$ is a heap in the canonical way with the heap operation, for all $a,b,c\in A$,
\begin{equation}\label{gr.he}
    \langle a, b ,c \rangle = a-b+c.
\end{equation}
Any additive map is automatically a heap homomorphism, but there are more heap homomorphisms than group homomorphisms, as any additive map composed with the translation by an element is also a heap homomorphism. 

Conversely, every (non-empty) abelian heap can be made into an abelian group or, to be precise, the family of isomorphic groups, by retracting the heap operation at any element $o\in A$. As a set, the group $G(A;o)$ is the same as $A$, with $o$ as the neutral element and the addition and inverse
\begin{equation}\label{retract}
    a+b = \langle a,o,b\rangle, \quad -a = \langle o,a,o\rangle.
\end{equation}
We refer to $G(A;o)$ as the {\em retract} of $A$ at $o$. The isomorphism between retracts of the same heap at different elements is given by the {\em translation automorphism},
\begin{equation}\label{trans}
    \tau_o^{\bar o}: G(A;o)\to G(A;\bar{o}), \qquad a\mapsto \langle a,o,\bar{o}\rangle .
\end{equation}
In view of all this a commutative heap operation can always be understood as a combination of binary operations as in \eqref{gr.he}.  Although the binary operations are defined relative to a chosen element, their combination is independent of the choice. This observation immediately justifies various equivalences such as
$$
\langle a,b,c\rangle =d \iff \langle a,d,c\rangle =b \iff \langle b,c,d\rangle =a
$$
and
$$
a=b \iff \langle a ,b, c\rangle = c, \qquad \mbox{for all $c\in A$}.
$$

The map $-\act_--$ is called the {\em action} of $\F$ on $A$ and  $a$ in $-\act_a-$ is known as the {\em base element}. Equation \eqref{act.add} means that the $-\act_ab: \F \to A$ is a heap homomorphism, and similarly, \eqref{act.heap} states the heap homomorpism property of $\alpha\act_a - : A\to A$. The first two conditions in \eqref{act.ass.uni.zero} state the associativity and unitality of the actions at a fixed base; the last normalisation property when combined with \eqref{act.add} means that $-\act_a b : \F \to G(A;a)$ is a group homomorphism. Finally, the {\em base change property} \eqref{act.base.change} ensures that all elements of $A$ play exactly the same role; every element can be a base for the action and the definition as a whole is independent of the choices made. This fits perfectly into the observer-independent framework alluded to in Introduction.

In the standard definition of an affine space as a set $A$ with a free and transitive action of a vector space $\overset{\to}{A}$, the heap operation and action are defined as, for all $a,b,c\in A$ and $\alpha \in \F$,
$$
\langle a,b,c\rangle = a+ \overset{-\!\to}{bc}, \qquad  
\alpha\act _ab = a+ \alpha\overset{-\!\to}{ab},
$$
where $\overset{-\!\to}{ab}$ denotes the vector from $a$ to $b$ etc.

Conversely, fixing any $o\in A$, the retract $G(A;o)$ becomes a vector space with multiplication by scalars
$$
\alpha a = \alpha\act_o b.
$$
We denote this vector space by $V(A;o)$. The vector from $a$ to $b$ is then given by the formula $\langle o,a,b\rangle = b-a$, where the difference is written in $G(A;o)$.

The major difficulty in defining Lie brackets on an affine space is to make all conditions independent from a specific element. A way to overcome this obstacle is presented in the following
\begin{definition}(\cite[Definition~3.1]{BrzPap:Lie})\label{def.Lie}
    Let $A$ be an affine space over $\F$. A {\em Lie bracket} on $A$ is a binary operation $[-,-]:A\times A \to A$ that satisfies the following conditions: 
    \begin{blist}
    \item for all $a\in A$, both $[a,-]$ and $[-,a]$ are affine transformations $A\to A$;
    \item affine antisymmetry, that is, for all $a,b\in A$,
    \begin{equation}\label{antisym}
        \langle [a,b],[a,a], [b,a]\rangle = [b,b];
    \end{equation}
    \item the affine Jacobi identity, that is, for all $a,b,c\in A$,
    \begin{equation}
        \langle [a,[b,c]],[a,a], [b,[c,a]], [b,b], [c,[a,b]] \rangle = [c,c].
    \end{equation}
    \end{blist}
    An affine space together with a Lie bracket is called a {\em Lie affgebra}.

    A homomorphism of Lie affgebras is an affine map that preserves the Lie bracket.
\end{definition}

Condition (a) means that the Lie bracket is {\em bi-affine transformation}. 

Basic examples of Lie affgebras include an affine space $A$ itself with the bracket 
\begin{equation}\label{zeta}
    [a,b]=\zeta\act_ab,
\end{equation} 
where $\zeta$ is any fixed scalar. As shown in \cite[Proposition~3.4]{BrzPap:Lie}, if dimension of $A$ is at least one, brackets corresponding to different scalars equip $A$ with non-isomorphic Lie affgebra structures. Furthermore, for any associative bi-affine multiplication on $A$, or more generally, for a bi-affine multiplication that satisfies conditions similar to that of a pre-Lie algebra one can define the {\em affine commutator} Lie bracket
\begin{equation}\label{com.Lie}
    [a,b] = \langle ab, ba,b\rangle;
\end{equation}
see \cite[Definition~3.8 \& Proposition~3.9]{BrzPap:Lie}.

It has been observed in \cite[Theorem~3.15]{BrzPap:Lie} that with every Lie affgebra $(A, [-,-])$ one can associate a family of isomorphic Lie algebras. Each member of this family has $V(A;o)$ as the underlying vector space and the Lie bracket
\begin{equation}\label{Lie.retract}
   [a,b]_o := \langle [a,b],[a,o],[o,o],[o,b],o\rangle =  [a,b]-[a,o]+[o,o]-[o,b].
\end{equation}
We denote this Lie algebra by $L(A;o)$ and call it the {\em Lie algebra retract} of $A$ at $o$. Different Lie affgebra structures might and often do collapse to the same Lie algebra. For example, for all $\zeta\in \F$ and $o\in A$, the Lie algebra associated to the bracket \eqref{zeta} is trivial, that is $[a,b]_o = o$. On the other hand, as explained in \cite[Theorem~4.3]{AndBrzRyb:ext}, any associative bi-affine multiplication  on $A$ makes $V(A;o)$ into an associative algebra with the product
$$
a\bullet b = ab - ao+o^2-ob.
$$
The affine commutator bracket \eqref{com.Lie}  on $A$ retracts at $o$ to  the bilinear commutator bracket $[a,b]_o = a\bullet b - b\bullet a$.

Finally, let us point out that if a Lie bracket on an affine space $A$ is an idempotent operation, that is $[a,a]=a$, for all $a\in A$, then, for all $o\in A$, $[a,b]_v := [a,b]-b$ is a vector-valued Lie bracket on the affine space viewed traditionally as a pair $(A, V(A;o))$, as introduced in \cite{GraGra:Lie}. This is the case for both \eqref{zeta} and \eqref{com.Lie} structures.

\section{Matrix Lie affgebras and corresponding Lie algebras}
In this paper we consider the following sets of matrices, which are defined for all positive integers $n$.

{\bf General normalised affine matrices:} 
$$
\gna(n,\F) = \{\mathbf{a}=(a_{kl})_{k,l=1}^{n+1} \in \mathrm{gl}(n+1, \F) \; |\; \sum_{k}a_{kl} = \sum_{k}a_{lk} =1,\; \forall l\}.
$$ 

{\bf Special normalised affine matrices:} 
$$
\sna(n,\F) = \{\mathbf{a} \in \gna(n,\F)  \; |\; \tr\mathbf{a} = 0\}.
$$ 

{\bf Anti-symmetric normalised affine matrices:} 
$$
\ona(n) = \{\mathbf{a} \in \gna(n,\R)  \; |\; a_{kk} = 1,\; a_{kl}= - a_{lk},\; \forall k\neq l\}.
$$ 

{\bf Anti-hermitian normalised affine matrices:} 
$$
\una(n) = \{\mathbf{a}=(a_{kl})_{k,l=1}^{n+1} \in \mathrm{gl}(n+1, \C) \; |\; \sum_{k}a_{kl} = \sum_{k}a_{lk} =i,\; a_{lm} = - \bar a_{ml}, \forall l,m\}.
$$ 

{\bf Special anti-hermitian normalised affine matrices:} 
$$
\suna(n) = \{\mathbf{a} \in \una(n)  \; |\;  \tr\mathbf{a} = 0\}.
$$ 

\begin{lemma}\label{lem.examples}
 Let $\mathfrak{a}$ denote any of the sets $ \gna(n,\F)$, $\sna(n,\F)$, $\ona(n)$, $\una(n)$ or $\suna(n)$. Then  $\mathfrak{a}$ is a Lie affgebra with the following operations:
 \begin{blist}
     \item the heap operation:
     $$\langle \mathbf{a}, \mathbf{b}, \mathbf{c}\rangle  = \mathbf{a} - \mathbf{b} + \mathbf{c},$$
     \item the affine $\F$-action
     $$
     \alpha\act_\mathbf{a}\mathbf{b}= \alpha \mathbf{b}- \alpha\mathbf{a}+ \mathbf{a},
     $$
     \item the affine commutator Lie bracket
     $$
     [\mathbf{a}, \mathbf{b}]= \mathbf{a}\mathbf{b} - \mathbf{b} \mathbf{a} + \mathbf{b},$$
 \end{blist}
 for all $\mathbf{a}, \mathbf{b}, \mathbf{c}\in \mathfrak{a}$ and $\alpha \in \F$.
 \end{lemma}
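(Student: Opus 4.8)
The plan is to exhibit each of the five sets as a nonempty affine subspace of a matrix Lie affgebra that is closed under the affine commutator bracket, and then to use the elementary observation that such a subspace is automatically a Lie affgebra. Concretely, I would regard $\mathrm{gl}(n+1,\F)$ as an affine space over $\F$ in the canonical vector-space fashion, with heap $\langle\mathbf{a},\mathbf{b},\mathbf{c}\rangle=\mathbf{a}-\mathbf{b}+\mathbf{c}$ and action $\alpha\act_{\mathbf{a}}\mathbf{b}=\alpha\mathbf{b}-\alpha\mathbf{a}+\mathbf{a}$, taking $\F=\R$ for $\ona(n),\una(n),\suna(n)$ and viewing $\mathrm{gl}(n+1,\C)$ as a real vector space in the latter two cases. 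Since matrix multiplication is $\F$-bilinear (hence bi-affine) and associative, $\mathrm{gl}(n+1,\F)$ is a Lie affgebra with the affine commutator $[\mathbf{a},\mathbf{b}]=\langle\mathbf{a}\mathbf{b},\mathbf{b}\mathbf{a},\mathbf{b}\rangle=\mathbf{a}\mathbf{b}-\mathbf{b}\mathbf{a}+\mathbf{b}$, by \eqref{com.Lie} (equivalently \cite[Proposition~3.9]{BrzPap:Lie}). The three operations in the lemma being restrictions of these, it then remains only to check, for each $\mathfrak{a}$, that (i) $\mathfrak{a}\neq\emptyset$, (ii) $\mathfrak{a}$ is closed under the heap and the action, and (iii) $\mathfrak{a}$ is closed under $[-,-]$.

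Step (i) is handled by exhibiting an element in each case: $I\in\gna(n,\F)\cap\ona(n)$, $iI\in\una(n)$, the permutation matrix of an $(n+1)$-cycle lies in $\sna(n,\F)$, and $\suna(n)$ similarly contains a concrete element. For step (ii) the key remark is that all the defining conditions have the form $\phi(\mathbf{a})=c$ for an $\F$-linear map $\phi$ and a fixed value $c$: writing $e=(1,\dots,1)^{T}$, the row- and column-sum conditions are $\mathbf{a}e=\lambda e$ and $e^{T}\mathbf{a}=\lambda e^{T}$, with $\lambda=1$ for $\gna,\sna,\ona$ and $\lambda=i$ for $\una,\suna$; the extra condition cutting $\ona(n)$ out of $\gna(n,\R)$ is $\mathbf{a}+\mathbf{a}^{T}=2I$; anti-hermiticity is $\mathbf{a}+\mathbf{a}^{*}=0$ (note $\mathbf{a}\mapsto\mathbf{a}+\mathbf{a}^{*}$ is $\R$-linear, consistently with $\F=\R$); and tracelessness is $\tr\mathbf{a}=0$. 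As $\mathbf{a}-\mathbf{b}+\mathbf{c}$ and $\alpha\mathbf{b}-\alpha\mathbf{a}+\mathbf{a}$ are affine combinations, any condition of this form is preserved, which gives (ii).

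The real content is step (iii), which I expect to be the only genuine---though still short---obstacle, precisely because $\sna,\ona,\una,\suna$ are \emph{not} closed under matrix multiplication, so one cannot merely restrict an associative structure; closure is a feature of the commutator alone. I would record four identities for $\mathbf{a},\mathbf{b}\in\mathrm{gl}(n+1,\F)$. First, if $\mathbf{a}e=\lambda e$ and $\mathbf{b}e=\lambda e$ then $[\mathbf{a},\mathbf{b}]e=\mathbf{a}\mathbf{b}e-\mathbf{b}\mathbf{a}e+\mathbf{b}e=\lambda^{2}e-\lambda^{2}e+\lambda e=\lambda e$, and symmetrically $e^{T}[\mathbf{a},\mathbf{b}]=\lambda e^{T}$; this covers the sum conditions for all five sets, both $\lambda=1$ and $\lambda=i$. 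Second, $\tr[\mathbf{a},\mathbf{b}]=\tr(\mathbf{a}\mathbf{b})-\tr(\mathbf{b}\mathbf{a})+\tr\mathbf{b}=\tr\mathbf{b}$, preserving tracelessness for $\sna,\suna$. Third, with $\mathbf{a}^{T}=2I-\mathbf{a}$ and $\mathbf{b}^{T}=2I-\mathbf{b}$ one gets $[\mathbf{a},\mathbf{b}]^{T}=\mathbf{b}^{T}\mathbf{a}^{T}-\mathbf{a}^{T}\mathbf{b}^{T}+\mathbf{b}^{T}=2I-[\mathbf{a},\mathbf{b}]$, i.e.\ $[\mathbf{a},\mathbf{b}]+[\mathbf{a},\mathbf{b}]^{T}=2I$, for $\ona$. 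Fourth, with $\mathbf{a}^{*}=-\mathbf{a}$ and $\mathbf{b}^{*}=-\mathbf{b}$, $[\mathbf{a},\mathbf{b}]^{*}=\mathbf{b}^{*}\mathbf{a}^{*}-\mathbf{a}^{*}\mathbf{b}^{*}+\mathbf{b}^{*}=\mathbf{b}\mathbf{a}-\mathbf{a}\mathbf{b}-\mathbf{b}=-[\mathbf{a},\mathbf{b}]$, for $\una,\suna$. Combining the appropriate ones shows $[\mathbf{a},\mathbf{b}]\in\mathfrak{a}$ in every case.

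To conclude, I would assemble (i)--(iii): each $\mathfrak{a}$ is then a nonempty affine subspace of $\mathrm{gl}(n+1,\F)$, hence an affine space over $\F$ with the heap and action of (a) and (b); for $\mathbf{a}\in\mathfrak{a}$ the maps $[\mathbf{a},-]$ and $[-,\mathbf{a}]$ restrict to affine transformations $\mathfrak{a}\to\mathfrak{a}$ (restrictions of the affine maps $[\mathbf{a},-],[-,\mathbf{a}]$ on $\mathrm{gl}(n+1,\F)$ whose images lie in $\mathfrak{a}$ by (iii)); and the affine antisymmetry \eqref{antisym} and the affine Jacobi identity, being identities valid on all of $\mathrm{gl}(n+1,\F)$, hold in particular for all elements of $\mathfrak{a}$. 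Thus each $\mathfrak{a}$ is a Lie affgebra with operations (a)--(c), as claimed.
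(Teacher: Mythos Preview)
Your proof is correct and follows essentially the same approach the paper alludes to: the paper's own proof is a single sentence declaring the result a ``straightforward calculation'' identical in method to \cite[Proposition~4]{Brz:spe}, and what you have written is precisely that calculation carried out in full, verifying that each $\mathfrak{a}$ is a nonempty affine subspace of $\mathrm{gl}(n+1,\F)$ closed under the affine commutator. Your explicit choice of $\F=\R$ for $\una(n)$ and $\suna(n)$ (needed since $\mathbf{a}\mapsto\mathbf{a}+\mathbf{a}^{*}$ is only $\R$-linear) is a useful clarification the paper leaves implicit.
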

 \begin{proof}
     This is a straightforward calculation no different from the method of the proof that $\sna(n,\C)$ is a Lie affgebra used in  \cite[Proposition~4]{Brz:spe}.
 \end{proof}

 The main result of the note provides one with alternative descriptions of matrix Lie algebras in terms of classical (matrix) Lie algebras. 
 \begin{theorem}\label{thm.iso}
 For all $n\neq \mathrm{char}(\F),\mathrm{char}(\F)-1$, 
 there are Lie affgebra isomorphisms with the following Lie sub-affgebras of  $\mathrm{gl}(n+1,\F)$:  
$$ \gna(n,\F)\cong 
     \begin{pmatrix}
         0 & 0\cr 0 & 1 
     \end{pmatrix} + 
     \begin{pmatrix}
         \mathrm{gl}(n,\F) & 0\cr 0 & 0 
     \end{pmatrix}; $$
$$\sna(n,\F)\cong 
     \begin{pmatrix}
         -\frac 1n \mathrm{I}_n & 0\cr 0 & 1 
     \end{pmatrix} + 
     \begin{pmatrix}
         \mathrm{sl}(n,\F) & 0\cr 0 & 0 
     \end{pmatrix};$$
     $$\ona(n)\cong 
     \mathrm{I}_{n+1} + 
     \begin{pmatrix}
         \mathrm{o}(n) & 0\cr 0 & 0 
     \end{pmatrix};$$
     $$ \una(n)\cong 
     \begin{pmatrix}
         0 & 0\cr 0 & i 
     \end{pmatrix} + 
     \begin{pmatrix}
         \mathrm{u}(n) & 0\cr 0 & 0 
     \end{pmatrix};$$
     $$ \suna(n)\cong 
     \begin{pmatrix}
         -\frac in \mathrm{I}_n & 0\cr 0 & i 
     \end{pmatrix} + 
     \begin{pmatrix}
         \mathrm{su}(n) & 0\cr 0 & 0 
     \end{pmatrix},$$
     where $\mathrm{I}_m$ denotes the $m\times m$ identity matrix.
 \end{theorem}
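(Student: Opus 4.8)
The plan is to realise all five isomorphisms as conjugation by a suitably chosen invertible matrix. For invertible $P\in\mathrm{gl}(n+1,\F)$ put $\phi_P(\mathbf{X})=P\mathbf{X}P^{-1}$. Conjugation is an automorphism of $\mathrm{gl}(n+1,\F)$ as an associative algebra: it is $\F$-linear, so it preserves the heap operation $\mathbf{a}-\mathbf{b}+\mathbf{c}$ and the affine action $\alpha\mathbf{b}-\alpha\mathbf{a}+\mathbf{a}$, and it is multiplicative, so it preserves the affine commutator bracket $\mathbf{a}\mathbf{b}-\mathbf{b}\mathbf{a}+\mathbf{b}$ of Lemma~\ref{lem.examples}. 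Hence, for each $\mathfrak{a}$ of Lemma~\ref{lem.examples}, $\phi_P$ restricts to a Lie affgebra isomorphism from $\mathfrak{a}$ onto $\phi_P(\mathfrak{a})$, and the image is automatically a Lie sub-affgebra of $\mathrm{gl}(n+1,\F)$. It thus remains to choose $P$, case by case, so that $\phi_P(\mathfrak{a})$ is exactly the affine subspace displayed in the statement.

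The reason this can be done is that the normalisation conditions single out the all-ones vector $\mathbf{1}=(1,\dots,1)^{T}\in\F^{n+1}$ as a scalar eigenvector. Reading off the definitions, $\mathbf{a}\in\gna(n,\F)$ iff $\mathbf{a}\mathbf{1}=\mathbf{1}$ and $\mathbf{1}^{T}\mathbf{a}=\mathbf{1}^{T}$; $\mathbf{a}\in\una(n)$ iff $\mathbf{a}\mathbf{1}=i\mathbf{1}$, $\mathbf{1}^{T}\mathbf{a}=i\mathbf{1}^{T}$ and $\mathbf{a}^{*}=-\mathbf{a}$; and $\mathbf{a}\in\ona(n)$ iff $S:=\mathbf{a}-\mathrm{I}_{n+1}$ satisfies $S^{T}=-S$ and $S\mathbf{1}=0$. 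Under the hypothesis $n\neq\mathrm{char}(\F),\mathrm{char}(\F)-1$ the integers $n$ and $n+1$ are nonzero in $\F$ (and over $\R$, $\C$ this holds trivially), so $\mathbf{1}$ spans a complement of $W:=\{v\in\F^{n+1}:\mathbf{1}^{T}v=0\}$, respectively of its Hermitian orthogonal complement over $\C$. I would take $P$ to send $e_{n+1}$ to a nonzero multiple of $\mathbf{1}$ and $e_1,\dots,e_n$ to a basis of $W$; concretely $P=\left(\begin{smallmatrix}\mathrm{I}_n & -\mathbf{1}_n\\ \tfrac{1}{n+1}\mathbf{1}_n^{T} & \tfrac{1}{n+1}\end{smallmatrix}\right)$ does this for the $\gna$ and $\sna$ cases (it has determinant $1$ and satisfies $P\mathbf{1}=e_{n+1}$, $e_{n+1}^{T}P=\tfrac{1}{n+1}\mathbf{1}^{T}$), while for $\ona$, $\una$, $\suna$ I would take $P$ orthogonal, respectively unitary, with $P\mathbf{1}$ a scalar multiple of $e_{n+1}$. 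A short computation from $\mathbf{a}\mathbf{1}=\lambda\mathbf{1}$ and $\mathbf{1}^{T}\mathbf{a}=\lambda\mathbf{1}^{T}$ then shows that both the last row and the last column of $\phi_P(\mathbf{a})$ equal $\lambda$ times those of $\mathrm{I}_{n+1}$, so $\phi_P(\mathbf{a})=\left(\begin{smallmatrix}B&0\\0&\lambda\end{smallmatrix}\right)$ for some $B\in\mathrm{gl}(n,\F)$, with $\lambda=1$ for $\gna,\sna,\ona$ and $\lambda=i$ for $\una,\suna$.

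It remains to pin down which $B$ arise. For $\gna(n,\F)$ there is no further restriction, which gives the first isomorphism. For $\sna(n,\F)$ the condition $\tr\mathbf{a}=0$ is conjugation-invariant and hence reads $\tr B=-1$; writing $B=-\tfrac{1}{n}\mathrm{I}_n+C$ — permissible because $n$ is invertible — this becomes $\tr C=0$, i.e.\ $C\in\mathrm{sl}(n,\F)$. For $\ona(n)$, applying $\phi_P$ to $\mathbf{a}=\mathrm{I}_{n+1}+S$ and using orthogonality of $P$ gives $(PSP^{-1})^{T}=-PSP^{-1}$; combined with the block shape this forces $PSP^{-1}=\left(\begin{smallmatrix}S'&0\\0&0\end{smallmatrix}\right)$ with $S'\in\mathrm{o}(n)$, so $\phi_P(\mathbf{a})=\mathrm{I}_{n+1}+\left(\begin{smallmatrix}S'&0\\0&0\end{smallmatrix}\right)$. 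For $\una(n)$, unitarity of $P$ gives $\phi_P(\mathbf{a})^{*}=-\phi_P(\mathbf{a})$, so $B\in\mathrm{u}(n)$; and for $\suna(n)$ the extra relation $\tr\mathbf{a}=0$ reads $\tr B=-i$, whence $B=-\tfrac{i}{n}\mathrm{I}_n+D$ with $D\in\mathrm{su}(n)$. Surjectivity onto each listed affine subspace follows by reversing these computations: a matrix of the prescribed block form is $\phi_P(\mathbf{a})$ for an $\mathbf{a}$ which one checks satisfies the defining relations of $\mathfrak{a}$.

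I expect the work to be mostly organisational. The two points that genuinely require care are, first, that over $\R$ and $\C$ the matrix $P$ can indeed be chosen orthogonal, respectively unitary, while still carrying $\mathbf{1}$ into the span of $e_{n+1}$ — this is what makes $\phi_P$ transport antisymmetry and anti-hermiticity of the block correctly — and, second, locating precisely where the numerical hypothesis is used: invertibility of $n+1$ is needed so that $\mathbf{1}$ has a complement and such a $P$ exists at all (all five cases), while invertibility of $n$ is needed only to write the representatives $-\tfrac{1}{n}\mathrm{I}_n$ and $-\tfrac{i}{n}\mathrm{I}_n$ and to split the trace off (the cases $\sna$ and $\suna$). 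Since $\R$ and $\C$ have characteristic $0$, both requirements are automatic there, and the last three isomorphisms follow by the same argument as the first two.
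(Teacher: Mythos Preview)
Your proposal is correct and follows essentially the same route as the paper: both realise all five isomorphisms as conjugation by an invertible matrix that carries the all-ones vector $\mathbf{1}$ to (a scalar multiple of) the last standard basis vector, passing to an orthogonal respectively unitary conjugating matrix in the $\ona$, $\una$, $\suna$ cases so that anti-symmetry and anti-hermiticity are transported correctly. The paper writes down explicit matrices $\mathbf{P}$ and its Gram--Schmidt orthonormalisation $\mathbf{U}$ and verifies the block shape by inspecting row- and column-sums, whereas your eigenvector formulation and your particular $P$ differ cosmetically but implement the identical idea.
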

 \begin{proof}
     In order to develop a universal argument that covers all cases listed above, let us fix an element $c\in \F$ and define 
     $$
      \ga_c(n,\F) = \{\mathbf{a}=(a_{kl})_{k,l=1}^{n+1} \in \mathrm{gl}(n+1, \F) \; |\; \sum_{k}a_{kl} = \sum_{k}a_{lk} =c,\; \forall l\}.
     $$
     For all $c\in \F$, $\ga_c(n,\F)$ is a Lie affgebra with the same affine and Lie algebra structures as in Lemma~\ref{lem.examples}. Furthermore, the first three Lie affgebras in the theorem are Lie sub-affgebras of $\gna(n,\F) = \ga_1(n,\F)$, while the last two are Lie sub-affgebras of $\ga_i(n,\C)$. In fact, for all $c,c'\neq 0$, $\ga_c(n,\F) \cong \ga_{c'}(n,\F)$ as Lie affgebras. Indeed, consider the following function 
     \begin{equation}\label{c-c'}
     f: \ga_c(n,\F) \to \ga_{c'}(n,\F), \qquad \mathbf{a}\mapsto \mathbf{a} + (c'-c)\mathrm{I}_{n+1}.   
     \end{equation}
     Being a combination of a linear and constant terms this is an affine isomorphism, which changes the normalisation of rows and columns in the desired way. Since the matrix that is a multiple of identity is central, one easily checks that, for all $\mathbf{a},\mathbf{b}\in \ga_c(n,\F)$
     $$
     f(\mathbf{a}\mathbf{b} - \mathbf{b}\mathbf{a} + \mathbf{b}) =  f(\mathbf{a})f(\mathbf{b}) - f(\mathbf{b})f(\mathbf{a})+ f( \mathbf{b}),
     $$
     i.e.\ that $f$ preserves the  affine commutator Lie brackets.

      Clearly, as affine spaces
     \begin{equation}\label{gac-gao}
         \ga_c(n,\F) = \mathbf{o} + \ga_0(n,\F),
     \end{equation}
     where $\mathbf{o} \in \ga_c(n,\F)$ is freely fixed. One easily checks that this identification is an isomorphism of Lie affgebras, provided that $\mathbf{o}$ commutes with all elements of $\ga_0(n,\F)$.
     
     First we show the existence of a similarity transformation which implements an isomorphism of affine spaces 
     \begin{equation}\label{ga}
     \ga_c(n,\F)\cong 
         \begin{pmatrix}
         \mathrm{gl}(n,\F) & 0\cr 0 & c 
     \end{pmatrix}.
     \end{equation}
     To this end, consider the $(n+1)\times (n+1)$-matrix
     \begin{equation}\label{mat.P}
         \mathbf{P} = 
         \begin{pmatrix}
             1 & 1 & 1 &\cdots & 1 & 1 \cr
             0 & 0 & 0 &\cdots & -1 & 1 \cr
             \cdots  & \cdots  & \cdots &\cdots & \cdots & \cdots \cr
             0 & 0 & -1 &\cdots & 0 & 1 \cr
             0 & -1 & 0 &\cdots & 0 & 1 \cr
             -1 & 0 & 0 &\cdots & 0 & 1 \cr
         \end{pmatrix}.
     \end{equation}
     If $n+1\neq \mathrm{char}(\F)$, $\mathbf{P}$ is invertible with the inverse
     $$
    \mathbf{P}^{-1} =  \frac 1{n+1} \begin{pmatrix}
             1 & 1 & 1 &\cdots & 1 & -n  \cr
             1 & 1 & 1 &\cdots & -n & 1  \cr
             \cdots  & \cdots  & \cdots &\cdots & \cdots & \cdots  \cr
             1 & 1 & -n &\cdots & 1 & 1  \cr
             1 & -n & 1 &\cdots & 1 & 1 \cr
             1 & 1 & 1& \cdots &1 & 1
         \end{pmatrix}.
        $$
    Since the last column of $\mathbf{P}$ and the last row of $\mathbf{P}^{-1}$ contain only identities (up to an overall normalisation factor in the latter case),
    \begin{equation}\label{c.conv}
        \mathbf{P} \begin{pmatrix} 0 & 0\cr 0 & c
        \end{pmatrix} \mathbf{P}^{-1} = \frac{1}{n+1} \begin{pmatrix}
            c & c & \cdots & c\cr
            c & c & \cdots & c\cr
            \cdots & \cdots & \cdots & \cdots\cr
            c & c & \cdots & c
        \end{pmatrix} \in \ga_c(n,\F).
    \end{equation}
        Second, note that the sums of the elements in each of the first $n$ columns of $\mathbf{P}$ and in each of the first $n$ rows of $\mathbf{P}^{-1}$ vanish. This, combined with the observation that in any matrix $\mathbf{a}\in \begin{pmatrix}
         \mathrm{gl}(n,\F) & 0\cr 0 & 0 
     \end{pmatrix}$  the last row and column contain zeros only, we immediately conclude that 
     \begin{equation}\label{ga.conv}
         \mathbf{P}\mathbf{a}\mathbf{P}^{-1} \in \ga_0(n,\F).
     \end{equation}
      Since both $\ga_0(n,\F)$ and $\begin{pmatrix}
         \mathrm{gl}(n,\F) & 0\cr 0 & 0 
         \end{pmatrix} \cong \mathrm{gl}(n,\F)$
         are $n\times n$ dimensional vector spaces, the similarity transformation establishes an isomorphism of vector spaces. In view of the identification \eqref{gac-gao} we obtain the required  isomorphism \eqref{ga} of affine spaces:
         \begin{equation}\label{iso}
             f: \ga_c(n,\F) \to \begin{pmatrix}
         \mathrm{gl}(n,\F) & 0\cr 0 & c 
     \end{pmatrix}, \qquad \mathbf{a}\mapsto \mathbf{P}^{-1}\mathbf{a}\mathbf{P}.
         \end{equation}
         The  map $f$ is given by conjugation by an invertible matrix, hence it preserves the Lie brackets in Lemma~\ref{lem.examples}. Finally, $\begin{pmatrix}
         0 & 0\cr 0 & c 
     \end{pmatrix}$ commutes with all elements of  $\begin{pmatrix}
         \mathrm{gl}(n,\F) & 0\cr 0 & 0 
     \end{pmatrix}$, its similarity transform commutes with all elements of $\mathrm{ga}_0(n,\F)$ and, consequently, the map $f$ is a Lie affgebra isomorphism. Therefore, the first assertion follows.
                 
         As the similarity transformation $f$ preserves traces, when restricted to $\sna (n,\F)$ the map $f$ gives the  isomorphism
         $$
         \begin{aligned}
             \begin{pmatrix}
         -\frac 1n \mathrm{I}_n & 0\cr 0 & 1 
     \end{pmatrix} + 
     \begin{pmatrix}
         \mathrm{sl}(n,\F) & 0\cr 0 & 0 
     \end{pmatrix} \longrightarrow & \mathbf{P}\begin{pmatrix}
         -\frac 1n \mathrm{I}_n & 0\cr 0 & 1 
     \end{pmatrix} \mathbf{P}^{-1} + \mathrm{sa}_0(n,\F)\\
     &= \frac 1n \begin{pmatrix}
         0 & 1 & 1 & \cdots & 1\cr
         1 & 0 & 1 & \cdots & 1\cr
         \cdots & \cdots & \cdots & \cdots & \cdots\cr
         1 &  1 & 1 & \cdots & 0 
     \end{pmatrix}  + \mathrm{sa}_0(n,\F)\\
     &= \sna(n,\F),
         \end{aligned}
         $$
         where $\mathrm{sa}_0(n,\F)$ denotes the Lie subalgebra of traceless matrices in  $\ga_0(n,\F)$.  Since 
         $\begin{pmatrix}
         -\frac 1n \mathrm{I}_n & 0\cr 0 & 1 
     \end{pmatrix}$ commutes with all elements of  $\begin{pmatrix}
         \mathrm{sl}(n,\F) & 0\cr 0 & 0 
     \end{pmatrix}$, its similarity transform commutes with all elements of $\mathrm{sa}_0(n,\F)$ and hence we obtain the required isomorphism of Lie affgebras. 
     
         For the remaining three assertions, $\F$ is either $\mathbb{R}$ or $\mathbb{C}$. Since $\mathbf{P}$ is a real matrix, the Gram-Schmidt orthonormalisation of its columns with respect to the Euclidean or dot product, starting with the last column $(1,1,\ldots, 1)^T$, produces the real unitary, hence orthogonal matrix $\mathbf{U}$ in which the sums of the elements in each column except the last one are zero. Explicitly,
         $$
         \mathbf{U} = \begin{pmatrix}
             \frac 1{\sqrt{(n+1)n}} &  \frac 1{\sqrt{n(n-1)}} & \frac 1{\sqrt{(n-1)(n-2)}} & \cdots & \frac 1{\sqrt{6}} & \frac 1{\sqrt{2}} & \frac 1{\sqrt{n+1}}\cr
             \frac 1{\sqrt{(n+1)n}} &  \frac 1{\sqrt{n(n-1)}} & \frac 1{\sqrt{(n-1)(n-2)}} & \cdots & \frac 1{\sqrt{6}} & -\frac 1{\sqrt{2}} & \frac 1{\sqrt{n+1}}\cr
             \frac 1{\sqrt{(n+1)n}} &  \frac 1{\sqrt{n(n-1)}} & \frac 1{\sqrt{(n-1)(n-2)}} & \cdots & - \sqrt{\frac 23} & 0 & \frac 1{\sqrt{n+1}}\cr
             \frac 1{\sqrt{(n+1)n}} &  \frac 1{\sqrt{n(n-1)}} & \frac 1{\sqrt{(n-1)(n-2)}} & \cdots & 0 & 0 & \frac 1{\sqrt{n+1}}\cr
             \cdots &  \cdots & \cdots & \cdots & \cdots &\cdots & \cdots\cr
             \frac 1{\sqrt{(n+1)n}} &  \frac 1{\sqrt{n(n-1)}} & - \sqrt{\frac {n-2}{n-1}} & \cdots & 0 & 0 & \frac 1{\sqrt{n+1}}\cr
             \frac 1{\sqrt{(n+1)n}} &  -\sqrt{\frac {n-1}{n}} & 0 & \cdots & 0 & 0 & \frac 1{\sqrt{n+1}}\cr
             -\sqrt{\frac {n}{n+1}} &  0 & 0 & \cdots & 0 & 0 & \frac 1{\sqrt{n+1}}
         \end{pmatrix}.
         $$
        The last column in $\mathbf{U}$ and hence the last row of $\mathbf{U}^{-1}= \mathbf{U}^\dagger$  are multiples of $(1,1,\ldots, 1)^T$. Therefore,  
         $\mathbf{U}\begin{pmatrix} 0 & 0\cr 0 & c
        \end{pmatrix} \mathbf{U}^{\dagger}$ is the same matrix as in \eqref{c.conv}. Similarly, the vanishing of the sums of the first $n$ columns of $\mathbf{U}$ and correspondingly first $n$ rows of $\mathbf{U}^{\dagger}$ allows one to conclude as in \eqref{ga.conv}. In consequence one obtains the unitary transformation
         \begin{equation}\label{unitary}
             g: \ga_c(n,\F) \to \begin{pmatrix}
         \mathrm{gl}(n,\F) & 0\cr 0 & c 
     \end{pmatrix}, \qquad \mathbf{a}\mapsto \mathbf{U}^{\dagger}\mathbf{a}\mathbf{U},
         \end{equation}
which covers all the remaining assertions of the theorem. 
\end{proof}

Theorem~\ref{thm.iso} enables one easily to identify  (up to isomorphism) Lie algebra retracts of all the matrix Lie affgebras listed there. 
\begin{corollary}\label{cor.iso}
   The following table lists Lie algebra retracts of matrix Lie affgebras:
   \begin{center}
       \begin{tabular}{|c|c|}
       \hline
        matrix Lie affgebra    & Lie algebra retract \\
        \hline
          $\gna (n,\F)$  &  $\mathrm{gl}(n,\F)$ \\
          $\sna (n,\F)$ &  $\mathrm{sl}(n,\F)$\\
          $\ona (n)$ & $\mathrm{o}(n)$\\
        $\una (n)$ & $\mathrm{u}(n)$\\
        $\suna (n)$ & $\mathrm{su}(n)$\\
        \hline
       \end{tabular}
   \end{center}
\end{corollary}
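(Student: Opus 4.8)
The plan is to combine Theorem~\ref{thm.iso} with the general retraction formula \eqref{Lie.retract}. First I would record the elementary fact that a Lie affgebra isomorphism $f\colon A\to B$ induces, for every $o\in A$, a Lie algebra isomorphism $L(A;o)\to L(B;f(o))$: since $f$ preserves the heap operation and the action, it restricts to an additive, $\F$-linear bijection $V(A;o)\to V(B;f(o))$, and since $f$ preserves both $[-,-]$ and $\langle-,-,-\rangle$ it sends the bracket \eqref{Lie.retract} at $o$ to the bracket \eqref{Lie.retract} at $f(o)$. Hence it is enough to compute the Lie algebra retract of each of the model Lie sub-affgebras occurring on the right-hand sides in Theorem~\ref{thm.iso}.

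Each such model has the form $\mathbf{o}+W$, where $W$ is one of $\begin{pmatrix}\mathrm{gl}(n,\F)&0\\0&0\end{pmatrix}$, $\begin{pmatrix}\mathrm{sl}(n,\F)&0\\0&0\end{pmatrix}$, $\begin{pmatrix}\mathrm{o}(n)&0\\0&0\end{pmatrix}$, $\begin{pmatrix}\mathrm{u}(n)&0\\0&0\end{pmatrix}$ or $\begin{pmatrix}\mathrm{su}(n)&0\\0&0\end{pmatrix}$, and $\mathbf{o}$ is the displayed constant matrix, which — as already verified in the proof of Theorem~\ref{thm.iso} — commutes with every element of $W$. I would take the base point to be $o=\mathbf{o}$. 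Under the bijection $\mathbf{o}+x\leftrightarrow x$ the retract $G(\mathbf{o}+W;\mathbf{o})$ and the scalar action at $\mathbf{o}$ go over to the usual vector space structure on $W$, so $V(\mathbf{o}+W;\mathbf{o})$ is precisely $W$; and $W$ is visibly isomorphic, as a Lie algebra under the matrix commutator, to the corresponding classical Lie algebra $\mathrm{gl}(n,\F)$, $\mathrm{sl}(n,\F)$, $\mathrm{o}(n)$, $\mathrm{u}(n)$ or $\mathrm{su}(n)$ by deletion of the last row and column.

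It then remains to check that the retracted bracket \eqref{Lie.retract} at $\mathbf{o}$ is the matrix commutator on $W$. Writing $\mathbf{a}=\mathbf{o}+x$ and $\mathbf{b}=\mathbf{o}+y$ with $x,y\in W$, and using the affine commutator $[\mathbf{a},\mathbf{b}]=\mathbf{a}\mathbf{b}-\mathbf{b}\mathbf{a}+\mathbf{b}$ of Lemma~\ref{lem.examples} together with the centrality of $\mathbf{o}$, one obtains $[\mathbf{a},\mathbf{b}]=xy-yx+\mathbf{o}+y$, $[\mathbf{a},\mathbf{o}]=[\mathbf{o},\mathbf{o}]=\mathbf{o}$ and $[\mathbf{o},\mathbf{b}]=\mathbf{o}+y$, whence
\begin{equation*}
[\mathbf{a},\mathbf{b}]_{\mathbf{o}}=[\mathbf{a},\mathbf{b}]-[\mathbf{a},\mathbf{o}]+[\mathbf{o},\mathbf{o}]-[\mathbf{o},\mathbf{b}]=xy-yx.
\end{equation*}
Therefore $L(\mathbf{o}+W;\mathbf{o})\cong W$ as Lie algebras, and the table follows. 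I do not expect a genuine obstacle here: the only points that need care are the bookkeeping in this last computation — which hinges entirely on $\mathbf{o}$ being central, a property already supplied by Theorem~\ref{thm.iso} — and the remark that isomorphic Lie affgebras have isomorphic Lie algebra retracts; everything else is packaged in Theorem~\ref{thm.iso}.
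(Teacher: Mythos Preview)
Your proposal is correct and follows essentially the same route as the paper: invoke Theorem~\ref{thm.iso} to pass to the model $\mathbf{o}+W$, retract at $\mathbf{o}$, and read off the classical Lie algebra. The paper's proof is terser---it does not spell out that Lie affgebra isomorphisms descend to retracts, nor the line-by-line computation of $[\mathbf{a},\mathbf{b}]_{\mathbf{o}}$---but your added detail is accurate and the underlying argument is the same.
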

\begin{proof}
    In view of Theorem~\ref{thm.iso}, Lie affgebras in the left column are isomorphic with Lie affgebras of the form
    $$
    \mathfrak{a} = \mathbf{o} + \begin{pmatrix}
        \mathfrak{g} & 0\cr 0 & 0
    \end{pmatrix},
    $$
    where $\mathfrak{g}$ is the corresponding Lie algebra from the right column of the table. When retracted at $\mathbf{o}$,
    $$
    L(\mathfrak{a};\mathbf{o}) = \begin{pmatrix}
        \mathfrak{g} & 0\cr 0 & 0\end{pmatrix} \cong \mathfrak{g}
    $$
    with the usual matrix vector space structure and with the commutator as the Lie bracket.
\end{proof}

\section*{Acknowledgements} 

The research of Tomasz Brzezi\'nski is partially supported by the National Science Centre, Poland, grant no. 2019/35/B/ST1/01115.

\bibliographystyle{amsplain}

\end{document}